\documentclass{article}

%
%
%

\usepackage{amsthm}
\usepackage{graphicx}
\usepackage{amssymb}
 \usepackage{mathptmx}      

\newtheorem{theorem}{Theorem}
\newtheorem{corollary}{Corollary}[theorem]

\theoremstyle{definition}
\newtheorem{example}{Example}[section]

\newtheorem*{remark}{Remark}



%
%
%

\begin{document}

\title{On a Dirichlet to Neumann and Robin to Neumann  operators suitable for reflecting harmonic functions subject to a nonhomogeneous condition on an arc
}


\author{Murdhy Aldawsari      
  \and
        Tatiana Savina 
}




\maketitle

\begin{abstract}
According to the  Schwarz symmetry principle, every harmonic function vanishing on a real analytic curve has an odd continuation, while a harmonic function satisfying homogeneous Neumann condition has the even continuation. 
There are different generalizations of the Schwarz symmetry principle. Most of them are dealing with homogeneous conditions and the Dirichlet case. Using a technique of Dirichlet to Neumann and Robin to Neumann operators, we derive reflection formulae for  nonhomogeneous Neumann and Robin conditions from a reflection formula subject to a nonhomogeneous Dirichlet condition.  

{\bf Keywords: }{Schwarz symmetry principle,  Dirichlet to Neumann operator, Robin to Neumann  operator, Analytic continuation}
\end{abstract}


%
%


\section{Introduction}
\label{intro}

Let $\Gamma \subset \mathbb{R}^2$ be a non-singular real analytic
curve and a point $z_0=(x_0,y_0)\in \Gamma$. Then, there exists a neighborhood $\Omega$ of
$z_0$ and an anti-conformal mapping $R:\Omega\to \Omega$ which is identity on
$\Gamma$, permutes the components $\Omega _1, \Omega _2$ of $\Omega\setminus \Gamma$
and relative to which any harmonic function $u(x, y)$ defined near
$\Gamma $ and vanishing on $\Gamma $ (the homogeneous Dirichlet condition) is
odd,
\begin{equation} \label{E:0.1}
         u(x ,y ) =-u(R(x ,y)).
\end{equation}

In the case of nonhomogeneous Dirichlet data, $u=\varphi (x,y)$ on $\Gamma$, when function $\varphi$ is holomorphically continuable into $\mathbb{C}^2$ near $\Gamma$, formula (\ref{E:0.1}) involves also values of function $u$ at two additional  points located on the  complexification  $\Gamma _{\mathbb{C}}$ of the curve $\Gamma$. All four points then create a so-called Study's rectangle \cite{226}.

To describe the Study's rectangle, consider a complex domain $W$ in the space $\mathbb{C}^2$ to which
the function $f$ defining the curve $\Gamma :=\{f(x,y)=0\}$  can be
 analytically continued such that $W\cap \mathbb{R}^2=U$. Using the
change of variables $z=x+iy$, $\zeta=x-iy,$ the equation of the
complexified curve $\Gamma _{\mathbb{C}}$ can be rewritten in the
form
$$
f\left ( \frac{z+\zeta}{2}, \, \frac{z-\zeta}{2i} \right ) =0,
$$
and if $grad \, f(x,y)\ne 0$ on $\Gamma$, can be also rewritten in
terms of the Schwarz function and its inverse, $w=S(z)$ and
$z=\widetilde{S}(\zeta)$ \cite{davis}. The mapping
$R$ mentioned above can be expressed in terms of the Schwarz function as follows,
\begin{equation} \label{E:0.2}
R(x ,y )=R(z )=\overline{S(z )}.
\end{equation}
Using the above notations, the reflection formula for harmonic functions subject to conditions 
$u_|{_{\Gamma}}=\phi$ can be written as the Study's rectangle:
\begin{equation} \label{E:0.1n}
 u(z ,\zeta ) + u(\widetilde{S}(\zeta) ,S(z ))=\varphi (\widetilde{S}(\zeta),\zeta)+\varphi (z, S(z)).
\end{equation}
The motivation of this paper goes back to D.~Khavinson's suggestion to think of a different method for deriving  reflection formulae rather than using the reflected fundamental solution method, described, for example, in \cite{nonloc}. This suggestion perhaps was rather related  to methods used in analysis. However, L.~Beznea's talk, devoted to  Dirichlet to Neumann operators, given at the International Conference on Complex Analysis, Potential Theory and Applications in honour of Professor Stephen J Gardiner on the occasion of his 60th Birthday, influenced this attempt to derive reflection formulas for other than Dirichlet conditions using similar to Dirichlet to Neumann operators studied by L.~Beznea et al. \cite{beznea}.

The structure of the paper is as follows. In section \ref{prelim} we discuss some preliminaries. In section \ref{R2} we discuss a Dirichlet to Neumann operator for a circular arc written in a form suitable for reflection. In section \ref{R3} we use this formula to derive a reflection formula for the case of nonhomogeneous Neumann conditions given on a circular arc. In section \ref{R5} we discuss a Robin to Neumann operator for a circular arc. Section \ref{R6} is devoted to reflection for the case of a nonhomogeneous Robin data on a circular arc, and section \ref{R7} discusses a Dirichlet to Neumann operator and reflection for a harmonic function subject to nonhomogeneous Neumann condition on an arc of an algebraic curve.

\section{Preliminaries}
\label{prelim}

Consider the Neumann boundary value problem in the unit disk $\mathbb{D}$ in the plane:
\begin{equation}
\left\{
                \begin{array}{ll}\label{N}
                  \Delta V=0 \qquad \mbox{in} \quad\mathbb{D}\\
                  \frac{\partial V}{\partial n}=\varphi           \qquad \mbox{on} \quad \partial\mathbb{D},\\
                 \end{array}
              \right.
\end{equation}
where $n$ is the outward normal.
Solution to this problem can be written in terms of the solution to a corresponding Dirichlet problem, $V=\Lambda (U)$  \cite{beznea}.  Here $U(x,y)$ is defined by
\begin{equation}
\left\{
                \begin{array}{ll}\label{D}
                  \Delta U=0 \qquad \mbox{in} \quad\mathbb{D}\\
                  U=\varphi           \qquad \mbox{on} \quad \partial\mathbb{D},\\
                 \end{array}
              \right.
\end{equation}
where $\varphi \in {\Large C}(\mathbb{\partial D})$.

\begin{theorem}\cite{beznea} Assume $\varphi : \partial\mathbb{D} \to \mathbb{R}$ is continuous and its integral along $\partial\mathbb{D}$ vanishes. If $u$ is the solution to the Dirichlet problem (\ref{D}), then 
\begin{equation}\label{L0}
V(z)=\Lambda (U)= \int\limits _0^ 1\frac{U(\rho z)}{\rho} \, d(\rho ), \quad z\in 
\mathbb{D}\cup \partial\mathbb{D},
\end{equation}
is the solution to the Neumann problem (\ref{N}) with $V(0)=0$, where $z=x+iy$.
\end{theorem}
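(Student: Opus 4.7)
The plan is to verify the three requirements on $V$ separately: harmonicity in $\mathbb{D}$, the Neumann boundary condition, and the normalization $V(0)=0$. Before that, I would check that the integrand in (\ref{L0}) is actually integrable. The zero-integral assumption on $\varphi$ combined with the mean-value property for harmonic functions gives $U(0)=\frac{1}{2\pi}\int_{\partial\mathbb{D}}\varphi\,dS=0$. Since $U$ is smooth in $\mathbb{D}$, a first-order Taylor expansion yields $U(\rho z)=O(\rho)$ as $\rho\to 0^+$ for each fixed $z\in\overline{\mathbb{D}}$, so $U(\rho z)/\rho$ is bounded on $[0,1]$ and the integral defining $V$ converges absolutely.

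Next I would establish harmonicity by differentiating under the integral sign. The scaling identity $\Delta_{z}\bigl[U(\rho z)\bigr]=\rho^{2}(\Delta U)(\rho z)$ together with $\Delta U=0$ gives $\Delta_{z}\bigl[U(\rho z)/\rho\bigr]=0$ pointwise, so $\Delta V(z)=\int_{0}^{1}\Delta_{z}\bigl[U(\rho z)/\rho\bigr]\,d\rho=0$ for $z\in\mathbb{D}$; the interchange is legitimate since $U$ is $C^{\infty}$ in $\mathbb{D}$ and the derivatives of $U(\rho z)/\rho$ in $z$ are uniformly integrable on compact subsets of $\mathbb{D}$.

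The Neumann condition is the heart of the argument, and I would handle it by a substitution. Writing $z=re^{i\theta}$ and changing variables via $s=\rho r$ transforms (\ref{L0}) into
\begin{equation*}
V(re^{i\theta})=\int_{0}^{r}\frac{U(se^{i\theta})}{s}\,ds.
\end{equation*}
Because $U$ is continuous on $\overline{\mathbb{D}}$ and $U(se^{i\theta})/s$ has a finite limit as $s\to 0$ thanks to $U(0)=0$, the fundamental theorem of calculus applies and yields
\begin{equation*}
\frac{\partial V}{\partial r}(re^{i\theta})=\frac{U(re^{i\theta})}{r}.
\end{equation*}
Setting $r=1$, where the outward normal coincides with $\partial/\partial r$, gives $\partial V/\partial n=U(e^{i\theta})=\varphi(e^{i\theta})$ on $\partial\mathbb{D}$, as required. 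Finally, $V(0)=\int_{0}^{1}U(0)/\rho\,d\rho=0$ follows directly from $U(0)=0$.

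I expect no serious obstacle: the substance of the argument is the change of variables that converts the dilation average into a radial antiderivative of $U/s$. The only technical point worth stating carefully is the justification for differentiating under the integral and for evaluating $\partial V/\partial r$ up to $r=1$; both reduce to the continuity of $U$ on $\overline{\mathbb{D}}$ guaranteed by $\varphi\in C(\partial\mathbb{D})$. If a Fourier-series presentation is preferred, one can alternatively expand $\varphi=\sum_{n\neq 0}a_{n}e^{in\theta}$, recognize $U(re^{i\theta})=\sum_{n\neq 0}a_{n}r^{|n|}e^{in\theta}$, integrate term by term to obtain $V(re^{i\theta})=\sum_{n\neq 0}(a_{n}/|n|)r^{|n|}e^{in\theta}$, and read off all three properties at once; this would serve as a useful sanity check on the computation above.
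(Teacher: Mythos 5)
Your proof is correct, but it takes a different route from the paper. The paper does not verify formula (\ref{L0}) directly: it \emph{derives} it by writing $U=\Re[g(z)]$ and $V=\Re[f(z)]$ for analytic $f,g$, observing that on $\partial\mathbb{D}$ one has $\frac{\partial V}{\partial n}=\Re[zf'(z)]$, setting $zf'(z)=g(z)$, and integrating this ODE along a radius to obtain $f(z)=f(z_0)-\int_z^{z_0}\frac{g(\tau)}{\tau}\,d\tau$; taking real parts recovers (\ref{L0}). You instead take the formula as given and check the three required properties by real-variable means: the mean-value property gives $U(0)=0$ (hence integrability of $U(\rho z)/\rho$ and $V(0)=0$), differentiation under the integral gives harmonicity, and the substitution $s=\rho r$ converts $V$ into $\int_0^r U(se^{i\theta})s^{-1}\,ds$, whence $\partial V/\partial r=U(re^{i\theta})/r\to\varphi$ as $r\to1$. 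Your verification is more elementary and self-contained --- it needs no harmonic conjugate, and it makes explicit where the zero-mean hypothesis enters, which the paper's sketch leaves implicit --- while the paper's complex-analytic derivation explains where the formula comes from and is the template reused for the Dirichlet-to-Neumann and Robin-to-Neumann operators in the later sections. It is worth noting that your radial-derivative computation is essentially the same limit argument the paper itself uses to prove its Theorem 2 (the arc version of this statement), so the two viewpoints are reconciled there.
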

A derivation of  formula (\ref{L0}) in paper \cite{beznea} was based on the following consideration. If $U(x,y)=\Re [g(z)]$ and $V(x,y)=\Re [f(z)]$, where  $g(z)$ and $f(z)$ are analytic functions,  then the following equalities hold on $\partial\mathbb{D}$,
$$
\varphi = U = \frac{\partial V}{\partial n}= (\nabla\, V\cdot n)=
\frac{\partial V}{\partial x}x+\frac{\partial V}{\partial y}y=\Re [zf^{\prime}(z)]
=\Re[g(z)].
$$
Setting $zf^{\prime}(z)=g(z)$ and integrating with from $z$ to $z_0$, one has
\begin{equation}\label{L1}
f(z)=f(z_0)- \int\limits _z^ {z_0}\frac{g(\tau)}{\tau} \, d\tau , \quad z, z_0\in 
\mathbb{D}\cup \partial\mathbb{D} .
\end{equation}
Formula (\ref{L1}) is a starting point for writing an expression relating harmonic functions
satisfying Dirichlet, Neumann, and Robin conditions on $\partial\mathbb{D}$.

\section{Dirichlet to Neumann operator for an arc of the unit circle}
\label{R2}

Consider the following two problems for functions $u(x,y)$ and $v(x,y)$ defined near an arc 
$\partial\mathbb{D}$ of a unit circle in the plane:
\begin{equation}
\left\{
                \begin{array}{ll}\label{D1}
                  \Delta u=0 \qquad \mbox{near} \quad\partial\mathbb{D}\\
                  u=\varphi           \qquad \mbox{on} \quad \partial\mathbb{D},\\
                 \end{array}
              \right.
\end{equation}
\begin{equation}
\left\{
                \begin{array}{ll}\label{N1}
                  \Delta v=0 \qquad \mbox{near} \quad\partial\mathbb{D}\\
                  \frac{\partial v}{\partial n}=\varphi           \qquad \mbox{on} \quad \partial\mathbb{D},\\
                 \end{array}
              \right.
\end{equation}
where $\varphi$ is  holomorphically continuable into $\mathbb{C}^2$ near $\partial\mathbb{D}$ .
 To find and expression relating $v(x,y)$ and 
$u(x,y)$ one can take the real part of  (\ref{L1}) by computing $v(x,y)=
\frac{1}{2}(f(z)+\overline{f(z)})$.
However, for what follows it is more convenient to complexify the problem, 
considering functions $v(z,\zeta )$ and $u(z,\zeta )$, where $(z,\zeta )\in \mathbb{C}^2$ with $\zeta =\bar z$ on
$\mathbb{R}^2$.
\begin{theorem}
Let $\partial\mathbb{D}$ be the unit circle in the plane centered at the origin. 
If $u(z,\zeta)$ is a solution to (\ref{D1}), then 
\begin{equation}\label{DN}
v(z,\zeta)=v(z_0,\zeta _0)
- \int\limits _z ^{z_0} \frac{u_1(\tau )}{\tau}d\tau
-\int\limits _\zeta ^{\zeta _0} \frac{u_2(\xi )}{\xi}d\xi 
\end{equation}
is a solution to (\ref{N1}). 
Here $u(z,\zeta)=u_1(z)+u_2(\zeta )$ is a harmonic function such that 
$u(z,\bar z )=\varphi$ on $\partial\mathbb{D}$.
\end{theorem}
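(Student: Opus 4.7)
The plan is to verify directly that the function $v(z,\zeta)$ defined by (\ref{DN}) satisfies both the complexified Laplace equation and the prescribed Neumann boundary condition on $\partial\mathbb{D}$. Formula (\ref{L1}), rewritten in the complexified variables, already suggests the form of $v$; what remains is a verification carried out in two independent steps — harmonicity and the boundary condition.

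First I would observe that the decomposition $u(z,\zeta)=u_1(z)+u_2(\zeta)$ is not an extra assumption but a consequence of harmonicity: in the complexified coordinates $z=x+iy$, $\zeta=x-iy$ the Laplacian becomes $4\,\partial^2/\partial z\partial\zeta$, so $\Delta u=0$ integrates at once to a sum of a holomorphic and an antiholomorphic piece. Differentiating the right-hand side of (\ref{DN}) via the fundamental theorem of calculus (with $z$ and $\zeta$ appearing as \emph{lower} limits of integration, hence a sign flip) yields
\[
v_z=\frac{u_1(z)}{z},\qquad v_\zeta=\frac{u_2(\zeta)}{\zeta}.
\]
Each depends on a single complex variable, so $v_{z\zeta}\equiv 0$ and $v$ is harmonic. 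The hypothesis that $\varphi$ continues holomorphically across $\partial\mathbb{D}$ is what makes the integration paths for $u_1$ and $u_2$ well-defined in a bicomplex neighborhood of the arc.

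The main, and essentially only, obstacle is expressing the outward normal derivative on $\partial\mathbb{D}$ in the complexified variables. From $\partial_x=\partial_z+\partial_\zeta$ and $\partial_y=i(\partial_z-\partial_\zeta)$, together with the fact that on the unit circle the outward normal at $(x,y)$ is the radial vector $(x,y)$, a short calculation gives $\partial/\partial n = x\,\partial_x + y\,\partial_y = z\,\partial_z + \zeta\,\partial_\zeta$ on $\partial\mathbb{D}$. Applying this to $v$ and inserting the expressions for $v_z$ and $v_\zeta$ obtained above,
\[
\frac{\partial v}{\partial n}=z\cdot\frac{u_1(z)}{z}+\zeta\cdot\frac{u_2(\zeta)}{\zeta}=u_1(z)+u_2(\zeta)=u(z,\zeta),
\]
which on $\partial\mathbb{D}$ coincides with $\varphi$ by (\ref{D1}). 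The additive constant $v(z_0,\zeta_0)$ is a free normalisation, reflecting the fact that the Neumann datum determines $v$ only up to a constant; once a base point $(z_0,\zeta_0)$ in the domain of holomorphy is fixed, formula (\ref{DN}) produces the unique such normalisation.
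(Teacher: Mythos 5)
Your proof is correct, and it follows the same overall strategy as the paper (direct verification of harmonicity and of the Neumann condition), but the boundary check is executed differently. The paper computes $\frac{\partial v}{\partial n}(z_0)$ from first principles as $\lim_{h\to 0}\frac{v(z_0(1+h))-v(z_0)}{h}$, converts the two contour integrals to a single radial integral $\frac1h\int_1^{1+h}\frac{u(re^{i\theta},re^{-i\theta})}{r}\,dr$ via the substitutions $\tau=re^{i\theta}$, $\xi=re^{-i\theta}$, and evaluates the limit; this is why the paper insists on radial integration paths with fixed $\theta$. You instead differentiate (\ref{DN}) by the fundamental theorem of calculus to get $v_z=u_1(z)/z$, $v_\zeta=u_2(\zeta)/\zeta$, and then use the operator identity $\frac{\partial}{\partial n}=x\partial_x+y\partial_y=z\partial_z+\zeta\partial_\zeta$ on $\partial\mathbb{D}$ (which is exactly the identity the paper itself invokes in its Preliminaries when motivating $zf'(z)=g(z)$, just not in the proof of this theorem). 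Your version is cleaner and slightly more robust: since $u_1(\tau)/\tau$ and $u_2(\xi)/\xi$ are holomorphic near the unit circle, the integrals are locally path-independent and no particular choice of radial path is needed, whereas the paper's limit computation is tied to the radial parametrization. Your opening observation that the splitting $u=u_1(z)+u_2(\zeta)$ is forced by $\Delta=4\,\partial_z\partial_\zeta$ is also a useful addition the paper leaves implicit. The one thing worth making explicit is that the identity $\partial_n=z\partial_z+\zeta\partial_\zeta$ is applied on the real slice $\zeta=\bar z$ restricted to $|z|=1$, which is where the boundary condition lives; with that said, there is no gap.
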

\begin{proof}
To prove  formula (\ref{DN}) one just needs to check  the conditions in (\ref{N1}).
Obviously, function $v(z,\zeta ) $ is harmonic, $\frac{\partial ^2 v}{\partial z\partial \zeta}=0$.


To check the  second condition in (\ref{N1}), we fix a point $(x_0,y_0) \in \mathbb{\partial D}$.
Using the notation $z_0=x_0+iy_0$, the outward normal at $z_0$, $n (z_0)=z_0$, can be computed as
$$\frac{\partial v}{\partial n}(z_0)
=\lim_{h\to 0}\frac{v(z_0+h \, z_0)-v(z_0)}{h}
=\lim_{h\to0}\frac{v(z_0(h+1))-v(z_0)}{h}.$$
Taking into account the expression (\ref{DN}), we have
$$\frac{\partial v}{\partial n}(z_0)
=\lim_{h\to0}\frac{1}{h} ( \int_{e^{i\theta}}^{(h+1)e^{i\theta}}\frac{u_1(\tau)}{\tau} d\tau
+\int_{e^{-i\theta}}^{(h+1)e^{-i\theta}}\frac{u_2(\xi)}{\xi} d\xi )
.$$
Using the substitutions $\tau=r e^{i\theta}$, $\xi=r e^{-i\theta}$ with fixed $\theta$, the above formula yields
$$
\frac{\partial v}{\partial n}(z_0)=
\lim_{h\to0}\frac{1}{h}{\int_{1}^{h+1}\frac{u_1(r e^{i\theta})+u_2(r e^{-i\theta})}{r} d r}=
\lim_{\widetilde{r}\to 1}\frac{u(\widetilde{r}e^{i\theta},\widetilde{r}e^{-i\theta})}{\widetilde{r}}=
u(e^{i\theta},e^{-i\theta})=\varphi(x_0,y_0).$$
This finishes the proof.
\end{proof}

\begin{example}
Let harmonic function satisfying the Dirichlet condition $\varphi =C$ be $u(x,y)=C$, where $C$ is a constant. Then for function $v(x,y)$, we have,
$$
v(z,\zeta)=v(z_0,\zeta _0)
- \frac{1}{2}\int\limits _z ^{z_0} \frac{C}{\tau}d\tau
-\frac{1}{2}\int\limits _\zeta ^{\zeta _0} \frac{C}{\xi}d\xi ,
$$
which implies that the corresponding to $u(x,y)=C$ function, satisfying Neumann condition 
$\frac{\partial v}{\partial n}=C$, is $v(x,y)={C}\ln \sqrt{x^2+y^2}+ const$.
\end{example}
\begin{example}
Let harmonic function satisfying homogeneous Dirichlet condition be 
$u(x,y)=\ln \sqrt{x^2+y^2}=\frac{1}{2}\ln z +\frac{1}{2}\ln \zeta$. Let us compute the corresponding function $v(x,y)$ with $\varphi =0$,
$$
v(z,\zeta)=v(z_0,\zeta _0)
- \frac{1}{2}\int\limits _z ^{z_0} \frac{\ln \tau}{\tau}d\tau
-\frac{1}{2}\int\limits _\zeta ^{\zeta _0} \frac{\ln \xi}{\xi}d\xi ,
$$
which implies that the corresponding solution of problem (\ref{N1}) is
$$
v(x,y)=\frac{1}{2}(\ln ^2 \, \sqrt{x^2+y^2}-\arctan ^2 \, \frac{y}{x})+ const.
$$
The latter expression in polar coordinates, $z=re^{i\theta}$, has a simpler form,
$v(r, \theta )= \frac{1}{4}(\ln ^2 \, r-\theta ^2)+ const$.
One can easily check that $v(r,\theta )$ satisfies the Laplace's equation,
$\Delta v=\frac{\partial ^2 v}{\partial r^2}+\frac{1}{r}\frac{\partial v}{\partial r}+
\frac{1}{r^2}\frac{\partial ^2 v}{\partial \theta ^2}
$, and its normal derivative vanishes on the unit circle.
\end{example}
\begin{example}
Harmonic function $u(x,y)=x^2-y^2$ on a unit circle equals to $\varphi =2 x^2 -1=\cos (2\theta)$.
To find the corresponding functions $u_1(z)$ and $u_2(\zeta )$, we make the substitution
$x=(z+\zeta )/2$ and $y=(z-\zeta )/(2i)$. This results in $u_1=z^2/2$ and $u_2=\zeta ^2/2$.
Thus the corresponding Dirichlet to Neumann operator is
$$
v(z,\zeta)=v(z_0,\zeta _0)
- \frac{1}{2}\int\limits _z ^{z_0} \frac{\tau ^2}{\tau}d\tau
-\frac{1}{2}\int\limits _\zeta ^{\zeta _0} \frac{\xi ^2}{\xi}d\xi .
$$
Thus, $v(x,y)=\frac{1}{2}(x^2-y^2) + const$, which is obviously a harmonic function. 
This function in polar coordinates has the form $v(r,\theta )= \frac{1}{2}r^2\cos (2\theta)$, whose normal derivative on the unit circle is $(\nabla v \cdot n)=\cos (2\theta)$, which equals to $\varphi$.
\end{example}
\begin{example}
Harmonic function $u(x,y)=x$ on a unit circle equals to $\varphi =\cos \theta$.
The corresponding functions  $u_1=z/2$ and $u_2=\zeta /2$.
Thus the expression for $v$ is
$$
v(z,\zeta)=v(z_0,\zeta _0)
- \frac{1}{2}\int\limits _z ^{z_0} \frac{\tau }{\tau}d\tau
-\frac{1}{2}\int\limits _\zeta ^{\zeta _0} \frac{\xi }{\xi}d\xi ,
$$
therefore, $v(x,y)=x + const$, which is obviously a harmonic function. 
This function in polar coordinates has the form $v(r,\theta )= \frac{1}{2}r^2\cos (2\theta)$, whose normal derivative on the unit circle equals to $\varphi$, $(\nabla v \cdot n)=\cos \theta$.
\end{example}

\section{Reflection about  an arc of a circle with nonhomogeneous Neumann data}
\label{R3}

Consider Neumann problem (\ref{N1}) in a neighborhood $\Omega$ of a point $(z_0,\bar z_0)\in \partial\mathbb{D}$. Let point $z\in \Omega _1\subset \Omega$ 
 for which formula (\ref{DN}) holds,
$$
v(z,\zeta)=v(z_0,\zeta _0)
- \int\limits _z ^{z_0} \frac{u_1(\tau )}{\tau}d\tau
-\int\limits _\zeta ^{\zeta _0} \frac{u_2(\xi )}{\xi}d\xi .
$$
For what follows, it is convenient to fix $\theta$ and to rewrite the integrals in polar coordinates,
\begin{equation}\label{DNp}
v(z,\zeta)=v(z_0,\zeta _0) - \int\limits _r ^{1} \frac{u(\rho e^{i\theta}, \rho e^{-i\theta})}{\rho}d\rho ,
\end{equation}
where $r=|z|$.

We remark that formula (\ref{E:0.1n}) for the unite circle can be rewritten as
\begin{equation} \label{RD}
 u(re^{i\theta},\frac{r}{e^{i\theta}}) + u(\frac{e^{i\theta}}{r},\frac{1}{re^{i\theta}})=\varphi (re^{i\theta},\frac{1}{re^{i\theta}})
+\varphi (\frac{e^{i\theta}}{r},\frac{r}{e^{i\theta}})
.
\end{equation}
Applying reflection, $R: \Omega \to \Omega$, to  formula (\ref{DNp}) and taking into account that the boundary points are fixed under $R$, we have
\begin{equation}\label{DNp1}
v(R(z,\zeta ))=v(z_0,\zeta _0)+I_1+I_2+I_3,
\end{equation}
where
$$
I_1= \int\limits _{\frac{1}{r}} ^{1} \frac{   u(\frac{e^{i\theta}}{\rho },\frac{1}{\rho e^{i\theta}})}{\rho}d\rho,
\quad
I_2=- \int\limits _{\frac{1}{r}} ^{1} \frac{ \varphi (\rho e^{i\theta}, \frac{1}{\rho e^{i\theta}})}{\rho}d\rho ,
\quad
I_3=- \int\limits _{\frac{1}{r}} ^{1} \frac{ \varphi (\frac{e^{i\theta}}{\rho}, \frac{\rho}{ e^{i\theta}})}{\rho}d\rho .
$$
To obtain a reflection formula for Neumann conditions, we need to rewrite the right hand side of (\ref{DNp1}) in terms of $v(z,\zeta)$. In order to do that, we make a substitution 
$\tilde \rho =1/\rho$ in $I_1$, which results in
$$
I_1= \int\limits _{\frac{1}{r}} ^{1} \frac{   u(\frac{e^{i\theta}}{\rho },\frac{1}{\rho e^{i\theta}})}{\rho}d\rho
=-\int\limits _{r} ^{1} \frac{   u(\tilde\rho{e^{i\theta}}, \tilde\rho{e^{-i\theta}}  )}{\tilde\rho}d\tilde\rho ,
$$
therefore,
$$
v(z_0,\zeta _0) +I_1=v(z,\zeta ).
$$
The second and third integrals could be combined if we do the same substitution,
$\tilde \rho =1/\rho$ in $I_3$. Indeed,
$$
I_3=- \int\limits _{1} ^{r} \frac{ \varphi (\tilde\rho{e^{i\theta}}, \frac{1}{\tilde\rho e^{i\theta}})}{\tilde\rho}d\tilde\rho , 
$$
hence,
$$
I_2+I_3=- \int\limits _{\frac{1}{r}} ^{r} \frac{ \varphi (\rho e^{i\theta}, \frac{1}{\rho e^{i\theta}})}{\rho}d\rho .
$$
Thus, we finally derived the following reflection formula for for harmonic functions satisfying Neumann condition on an arc  of the unite circle
\begin{equation}\label{RCircle}
v(\frac{e^{i\theta}}{r},\frac{1}{re^{i\theta}})=v(r{e^{i\theta}},r{e^{-i\theta}})- \int\limits _{\frac{1}{r}} ^{r} \frac{ \varphi (\rho e^{i\theta},\, \frac{1}{\rho e^{i\theta}})}{\rho}d\rho .
\end{equation}
\begin{example}
Consider a harmonic function, whose normal derivative on the unite circle, $x^2+y^2=1$, is constant, $\varphi =C$. Then formula
(\ref{RCircle}) reads as
\begin{equation}
v(\frac{e^{i\theta}}{r},\frac{1}{re^{i\theta}})=v(r{e^{i\theta}},r{e^{-i\theta}})-2C \ln r .
\end{equation}
\end{example}
\begin{example}
Consider a harmonic function, whose normal derivative on the unite circle, $x^2+y^2=1$, equals $\varphi =4x^2-2$. 
To compute the integral in formula (\ref{RCircle}), we rewrite function $\varphi$ as
$$
\varphi =4x^2-2=4(\frac{z+\zeta}{2})^2-2=z^2+\zeta ^2 +2(z\zeta -1).
$$
The last term vanishes on the complexification of the unit circle, therefore,
$\varphi (z,\zeta)= z^2 +\zeta ^2$, which implies 
$ \varphi (\rho e^{i\theta},\, \frac{1}{\rho e^{i\theta}})=\rho ^2 e^{2i\theta}+\frac{1}{\rho ^2 e^{2i\theta}}$. Thus,
$$
- \int\limits _{\frac{1}{r}} ^{r} \frac{ \varphi (\rho e^{i\theta},\, \frac{1}{\rho e^{i\theta}})}{\rho}d\rho 
=\int\limits _r ^{\frac{1}{r}}  \frac{\rho ^2e^{2i\theta}+ \frac{1}{\rho ^2e^{i\theta}}}{\rho}d\rho.
$$
Then formula
(\ref{RCircle}) reads as
\begin{equation}
v(\frac{e^{i\theta}}{r},\frac{1}{re^{i\theta}})=v(r{e^{i\theta}},r{e^{-i\theta}})+(\frac{1}{r^2}-r^2)\cos{2\theta} .
\end{equation}
\end{example}

\section{Neumann to Robin operator for an arc of the unit circle}
\label{R5}

Consider the following Robin problem 
\begin{equation}
\left\{
                \begin{array}{ll}\label{R1}
                  \Delta w=0 \qquad \mbox{near} \quad\partial\mathbb{D}\\
                  a\,w+\frac{\partial w}{\partial n}=\varphi _w          \qquad \mbox{on} \quad \partial\mathbb{D},\\
                 \end{array}
              \right.
\end{equation}
where $a$ and $b$ are real constants with $b\ne 0$.

Let $u$ and $v$ be harmonic functions defined near the arc 
$\partial\mathbb{D}$ of a unit circle in the plane:
\begin{equation}
\left\{
                \begin{array}{ll}\label{D1r}
                  \Delta u=0 \qquad \mbox{near} \quad\partial\mathbb{D}\\
                  u=\varphi  _u         \qquad \mbox{on} \quad \partial\mathbb{D},\\
                 \end{array}
              \right.
\end{equation}
\begin{equation}
\left\{
                \begin{array}{ll}\label{N1r}
                  \Delta v=0 \qquad \mbox{near} \quad\partial\mathbb{D}\\
                  \frac{\partial v}{\partial n}=\varphi  _v         \qquad \mbox{on} \quad \partial\mathbb{D}.\\
                 \end{array}
              \right.
\end{equation}
Here $\varphi_w$, $\varphi_u$, and $\varphi_v$ are  holomorphically continuable into $\mathbb{C}^2$ near $\partial\mathbb{D}$.

To derive a formula connecting functions $w(x,y)$, $u(x,y)$, and $v(x,y)$, we think of them as real parts of analytic functions, $w=\Re [h]$, $u=\Re [g]$, and $v=\Re [f]$ and assume that 
$\varphi_w=\varphi_u +\varphi_v$. 

For a point on a boundary, $z_0\in \partial\mathbb{D}$, we have the following equalities
$$
\frac{\partial v}{\partial n}(z_0)=D v\cdot n (z_0)=\Re [z_0 f^{\prime}(z_0)]=\varphi _v,
$$
$$
a\, w+\frac{\partial w}{\partial n}(z_0)=a\, w+D w\cdot n (z_0)=\Re [a\, h+b\, z_0 h^{\prime}(z_0)]=\varphi _w.
$$
Thus, setting
\begin{equation}\label{h}
a\, h(z)+b\, z\,h^{\prime}(z)=z\,f^{\prime}(z)+g(z)
\end{equation}
does not contradict the condition  $\varphi_w=\varphi_u +\varphi_v$. 
Multiplying (\ref{h}) by $z^{a/b-1}/b$, we have
$$
z^{\frac{a}{b}}h'(z)+\frac{a}{b}z^{\frac{a}{b}-1} h(z)=\frac{1}{b} z^{\frac{a}{b}} f'(z)+\frac{1}{b}z^{\frac{a}{b}-1} g(z), 
$$
which can be rewritten as
$$
(z^{\frac{a}{b}}h(z))'=\frac{1}{b} (z^{\frac{a}{b}} f(z))'+\frac{1}{b}z^{\frac{a}{b}-1} g(z)-\frac{a}{b^2}z^{\frac{a}{b}-1}f(z).
$$
Integrating from $z$ to $z_0$ along a segment with constant argument, we obtain
$$
z_0^{\frac{a}{b}}h(z_0)-z^{\frac{a}{b}}h(z)=
\frac{1}{b} (z_0^{\frac{a}{b}} f(z_0)-z^{\frac{a}{b}} f(z))
+\frac{1}{b}\int\limits _z^{z_0} \zeta ^{\frac{a}{b}-1}[g(\zeta )-\frac{a}{b}
f(\zeta )]\, d\zeta ,
$$
which implies
$$
h(z)=\Bigl (\frac{z_0}{z}\Bigr )^{\frac{a}{b}}\Bigl (h(z_0)-\frac{1}{b}f(z_0)\Bigr )
+\frac{1}{b}f(z)
-\frac{1}{b}z^{-\frac{a}{b}}\int\limits _z^{z_0} \zeta ^{\frac{a}{b}-1}[g(\zeta )-\frac{a}{b}
f(\zeta )]\, d\zeta .
$$
Taking the real part from both sides of the latter formula, one can express a solution of the Robin problem
(\ref{R1}) in terms of the corresponding solutions to  the Dirichlet and Neumann problems (\ref{D1r}) and (\ref{N1r}).
Alternatively, one can divide formula (\ref{h}) by z and then integrate, which results 
\begin{equation}\label{hf}
f(z)=f(z_0)+b(h(z)-h(z_0))-
a\, \int\limits _z^{z_0} \frac {h(\zeta)}{\zeta}d\zeta
-\int\limits _z^{z_0} \frac {g(\zeta)}{\zeta}d\zeta .
\end{equation}
Assuming  $\varphi _u=\varphi _v$ and taking into account formula (\ref{DN}), we obtain a Robin to Neumann operator
$$
v(z,\zeta)=v(z_0,\zeta _0)+\frac{b}{2}[w(z,\zeta)-w(z_0,\zeta _0)]-
\frac{a}{2}\, \int\limits _z^{z_0} \frac {w_1(\tau)}{\tau}d\tau
-\frac{a}{2}\, \int\limits _{\zeta}^{\zeta _0} \frac {w_2(\xi)}{\xi}d\xi,
$$
where $w(z,\zeta )=w_1(z) +w_2(\zeta )$.
\begin{theorem}
Let $\partial\mathbb{D}$ be the unit circle in the plane centered at the origin. 
If $w(z,\zeta)$ is a solution to (\ref{R1}), then a Robin to Neumann operator has the form
\begin{equation}\label{RN}
v(z,\zeta)=v(z_0,\zeta _0)+\frac{b}{2}[w(z,\zeta)-w(z_0,\zeta _0)]-
\frac{a}{2}\, \int\limits _z^{z_0} \frac {w_1(\tau)}{\tau}d\tau
-\frac{a}{2}\, \int\limits _{\zeta}^{\zeta _0} \frac {w_2(\xi)}{\xi}d\xi .
\end{equation}
Here $v(z,\zeta)$ is a harmonic function such that 
$\frac{\partial v}{\partial n}=\varphi _v=\varphi _w/2$ on $\partial\mathbb{D}$.
\end{theorem}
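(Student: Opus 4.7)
The plan is to verify directly that $v(z,\zeta)$ defined by (\ref{RN}) is harmonic and satisfies the prescribed Neumann condition $\partial v/\partial n=\varphi_v=\varphi_w/2$ on $\partial\mathbb{D}$, mirroring the proof of the Dirichlet-to-Neumann theorem that established (\ref{DN}). Harmonicity will follow at once from the harmonic splitting $w(z,\zeta)=w_1(z)+w_2(\zeta)$: applying the mixed derivative $\partial^2/\partial z\partial\zeta$ annihilates $w$ by harmonicity, and each of the two integrals in (\ref{RN}) depends only on one of the variables $z,\zeta$, so it too is killed by the mixed derivative.

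For the boundary condition, the first step I plan to carry out is to parametrize the radial segment from $z_0=e^{i\theta}$ by $\tau=\rho e^{i\theta}$, $\xi=\rho e^{-i\theta}$, which gives $d\tau/\tau=d\xi/\xi=d\rho/\rho$. Along the ray of fixed argument $\theta$, with $z=re^{i\theta}$ and $\zeta=re^{-i\theta}$, the two single-variable integrals recombine into a single integral of $w$ along the ray, and formula (\ref{RN}) reduces to
\begin{equation*}
v(re^{i\theta},re^{-i\theta})=v(z_0,\zeta_0)+\tfrac{b}{2}\bigl[w(re^{i\theta},re^{-i\theta})-w(z_0,\zeta_0)\bigr]+\tfrac{a}{2}\int\limits_1^{r}\frac{w(\rho e^{i\theta},\rho e^{-i\theta})}{\rho}\,d\rho.
\end{equation*}
Since the outward unit normal at $z_0\in\partial\mathbb{D}$ coincides with the radial direction, $\partial v/\partial n(z_0)$ is obtained by differentiating this expression in $r$ and evaluating at $r=1$. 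The $v(z_0,\zeta_0)$ and $-\tfrac{b}{2}w(z_0,\zeta_0)$ terms are constants and drop out; the $w$-term contributes $\tfrac{b}{2}\,\partial w/\partial n(z_0)$; and the Fundamental Theorem of Calculus yields $\tfrac{a}{2}w(z_0,\zeta_0)$ from the integral. Summing these and invoking the Robin condition $aw+b\,\partial w/\partial n=\varphi_w$ at $z_0$ produces $\partial v/\partial n(z_0)=\tfrac{1}{2}\varphi_w(z_0)=\varphi_v(z_0)$, as required.

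The main obstacle I expect is bookkeeping rather than analysis: keeping straight the identification of the radial derivative with the normal derivative on the unit circle, the recombination $w_1(\rho e^{i\theta})+w_2(\rho e^{-i\theta})=w(\rho e^{i\theta},\rho e^{-i\theta})$ along the fixed-$\theta$ ray, and the sign change when reversing the limits of integration. Each of these mirrors a step already carried out for (\ref{DN}), so no new analytic input beyond the previous theorem and the algebraic identity (\ref{hf}) is required.
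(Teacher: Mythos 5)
Your proposal is correct and follows essentially the same route as the paper: harmonicity via the splitting into functions of $z$ and $\zeta$ alone, then verification of the Neumann condition by identifying the normal derivative with the radial derivative at $r=1$ and applying the fundamental theorem of calculus to the integral terms before invoking the Robin condition $a\,w+b\,\partial w/\partial n=\varphi_w$. The only cosmetic difference is that you first recombine the two single-variable integrals into one radial integral of $w$ before differentiating, whereas the paper differentiates the two integrals separately; the computation is the same.
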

\begin{proof}
To prove  formula (\ref{RN}) one  needs to check  the conditions in (\ref{N1r}).
Obviously, function $v(z,\zeta ) $ is harmonic, $\frac{\partial ^2 v}{\partial z\partial \zeta}=0$ if $w(z,\zeta )$ is harmonic.

To check the second condition we differentiate (\ref{RN}) with respect to $r$,
$$
\frac{\partial}{\partial r}v(z,\zeta)=\frac{b}{2}\frac{\partial}{\partial r}w(z,\zeta )
+\frac{a}{2}\frac {w_1(z)}{z}e^{i\theta}+\frac{a}{2}\frac {w_2(\zeta )}{z}e^{-i\theta} .
$$
Then on $\partial\mathbb{D}$, the latter formula reduces to
$$
\frac{\partial}{\partial r}v(z_0,\zeta _0)=\frac{b}{2}\frac{\partial}{\partial r}w(z_0,\zeta _0)
+\frac{a}{2}\frac {w_1(z_0)}{e^{i\theta}}e^{i\theta}+\frac{a}{2}\frac {w_2(\zeta _0 )}{e^{-i\theta}}e^{-i\theta},
$$
resulting in
$$
\frac{\partial}{\partial r}v(z_0,\zeta _0)=\varphi _v =
\frac{b}{2}\frac{\partial}{\partial r}w(z_0,\zeta _0)+\frac{a}{2}{w(z_0,\zeta _0)}=
\frac{1}{2}\varphi _w .
$$
This finishes the proof.
\end{proof}
\begin{example}
Consider a solution to Robin problem $w=\ln r$ satisfying the Robin condition with 
$\varphi _w=b$
on $\partial\mathbb{D}$.
Function $w$ has the representation 
$w(z,\zeta )=w_1 (z) +w_2 (\zeta )=\frac{1}{2}\ln z+ \frac{1}{2}\ln \zeta $.
Then formula (\ref{RN}) reads as
$$
v(z,\zeta)=v(z_0,\zeta _0)+\frac{b}{2}\ln r -
\frac{a}{4}\, \int\limits _z^{z_0} \frac {\ln(\tau)}{\tau}d\tau
-\frac{a}{4}\, \int\limits _{\zeta}^{\zeta _0} \frac {\ln (\xi)}{\xi}d\xi ,
$$
therefore,
$$
v(z,\zeta)=\frac{b}{2}\ln r+\frac{a}{4}(\ln^2 r-\theta ^2)+ const,
$$ 
which is a harmonic function satisfying the Neumann condition on $\partial\mathbb{D}$ with $\varphi _v =\frac{b}{2}$.
\end{example}
\begin{corollary} 
Solutions to the Dirichlet problem (\ref{D1r}) are related to the solutions to the Robin problem (\ref{R1}),
\begin{equation}\label {DR}
u=\frac{a}{2}w+\frac{b\,r}{2}\frac{\partial w}{\partial r}
\end{equation}
whenever $\varphi _u=\frac{1}{2}\varphi _w$ on $\partial\mathbb{D}$.
\end{corollary}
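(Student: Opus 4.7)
The corollary asserts that $u:=\tfrac{a}{2}w+\tfrac{br}{2}\,\partial w/\partial r$, built from a Robin-problem solution $w$, solves the Dirichlet problem (\ref{D1r}) with datum $\varphi_u=\tfrac{1}{2}\varphi_w$. My plan is to verify the two defining properties directly: harmonicity of the combination, and the correct trace on $\partial\mathbb{D}$.

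For harmonicity I would complexify as elsewhere in the paper. With $z=re^{i\theta}$, $\zeta=re^{-i\theta}$ and $w(z,\zeta)=w_1(z)+w_2(\zeta)$, the identity $\partial/\partial r=e^{i\theta}\partial/\partial z+e^{-i\theta}\partial/\partial\zeta$ gives $r\,\partial w/\partial r=z\,w_1'(z)+\zeta\,w_2'(\zeta)$. This is again a sum of a function of $z$ alone and a function of $\zeta$ alone, hence annihilated by $\partial^2/\partial z\partial\zeta$, so $r\,\partial_r w$ is harmonic; the linear combination $\tfrac{a}{2}w+\tfrac{br}{2}\partial_r w$ is then harmonic as well.

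For the boundary trace, on $\partial\mathbb{D}$ the outward normal coincides with the radial direction, so $\partial/\partial n=\partial/\partial r$ there. Evaluating at $r=1$,
\[
\Bigl[\tfrac{a}{2}w+\tfrac{br}{2}\tfrac{\partial w}{\partial r}\Bigr]\Big|_{\partial\mathbb{D}}=\tfrac{1}{2}\bigl(a\,w+b\,\partial_n w\bigr)\big|_{\partial\mathbb{D}}=\tfrac{1}{2}\varphi_w=\varphi_u,
\]
which is the required Dirichlet datum.

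A second route, which also explains where the factor $\tfrac{1}{2}$ comes from, reads (\ref{DR}) off from Theorem~2 together with the complex identity (\ref{h}): taking real parts of $g(z)=a\,h(z)+b\,z\,h'(z)-z\,f'(z)$ and using $\Re[z\,h'(z)]=r\,\partial_r w$, $\Re[z\,f'(z)]=r\,\partial_r v$ yields $u=a\,w+b\,r\,\partial_r w - r\,\partial_r v$; a routine $r$-differentiation of the two line integrals in (\ref{RN}) along radial paths with fixed $\theta$ gives $r\,\partial_r v=\tfrac{br}{2}\partial_r w+\tfrac{a}{2}w$, and substitution recovers (\ref{DR}). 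I do not expect any serious obstacle; the only minor care needed is in handling the $\tau$- and $\xi$-integrals in (\ref{RN}) under differentiation, which is a direct fundamental-theorem-of-calculus manipulation.
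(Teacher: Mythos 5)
Your proposal is correct, and your primary route is genuinely different from the paper's. The paper obtains (\ref{DR}) by equating the two representations of the same Neumann solution $v$ --- the Dirichlet-to-Neumann formula (\ref{DN}) and the Robin-to-Neumann formula (\ref{RN}) --- restricting to real points in polar coordinates, and differentiating the resulting integral identity with respect to $r$; this is essentially your ``second route.'' Your first route instead verifies the conclusion directly: $r\,\partial_r w=z\,w_1'(z)+\zeta\,w_2'(\zeta)$ is again a sum of a function of $z$ and a function of $\zeta$, hence harmonic, and on $\partial\mathbb{D}$ the radial derivative is the normal derivative, so the combination $\tfrac{a}{2}w+\tfrac{br}{2}\partial_r w$ has trace $\tfrac12(aw+b\,\partial_n w)=\tfrac12\varphi_w$. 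This is shorter, avoids any appeal to the operators (\ref{DN}) and (\ref{RN}) and to the auxiliary function $v$, and shows the relation holds for an arbitrary solution $w$ of (\ref{R1}) rather than only for the particular triple $(u,v,w)$ tied together by the operator identities; what it does not explain (and the paper's route does) is how the formula arises naturally from the two operators, i.e.\ why it deserves to be called a corollary of Theorem 3. One small caveat: you read the Robin condition as $aw+b\,\partial_n w=\varphi_w$, which is consistent with the derivations following (\ref{h}) but not with the literal statement of (\ref{R1}), where the coefficient $b$ is missing; with the condition exactly as printed in (\ref{R1}) your trace computation would give $\tfrac12(aw+b\,\partial_n w)\ne\tfrac12\varphi_w$ unless $b=1$, so it is worth flagging that you are using the corrected form of the boundary condition.
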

Indeed,  formulae (\ref{DN}) and (\ref{RN}) imply
$$
\frac{b}{2}[w(z,\zeta)-w(z_0,\zeta _0)]-
\frac{a}{2}\, \int\limits _z^{z_0} \frac {w_1(\tau)}{\tau}d\tau
-\frac{a}{2}\, \int\limits _{\zeta}^{\zeta _0} \frac {w_2(\xi)}{\xi}d\xi
= - \int\limits _z ^{z_0} \frac{u_1(\tau )}{\tau}d\tau
-\int\limits _\zeta ^{\zeta _0} \frac{u_2(\xi )}{\xi}d\xi .
$$
Assuming that $(x,y)\in \mathbb{R}^2$ and setting $z=re^{i\theta}$, $\zeta =re^{-i\theta}$, we have
$$
\frac{b}{2}[w(re^{i\theta},re^{-i\theta})-w(z_0,\zeta _0)]-
\frac{a}{2}\, \int\limits _r^{1} \frac {w(\rho e^{i\theta},\rho e^{-i\theta})}{\rho}d\rho
= -\int\limits _r^{1} \frac {u(\rho e^{i\theta},\rho e^{-i\theta})}{\rho}d\rho .
$$
Differentiation of the latter formula with respect to $r$  results in (\ref{DR}).


\section{Reflection about a circular arc with nonhomogeneous Robin data}
\label{R6}

To derive a reflection formula for solutions to (\ref{R1}) we use formula 
(\ref{RCircle}),
$$
v(\frac{e^{i\theta}}{r},\frac{1}{re^{i\theta}})=v(r{e^{i\theta}},r{e^{-i\theta}})- \int\limits _{\frac{1}{r}} ^{r} \frac{ \varphi _v (\rho e^{i\theta},\, \frac{1}{\rho e^{i\theta}})}{\rho}d\rho 
$$
and formula (\ref{RN}),
$$
v(z,\zeta)=v(z_0,\zeta _0)+\frac{b}{2}[w(z,\zeta)-w(z_0,\zeta _0)]-
\frac{a}{2}\, \int\limits _z^{z_0} \frac {w_1(\tau)}{\tau}d\tau
-\frac{a}{2}\, \int\limits _{\zeta}^{\zeta _0} \frac {w_2(\xi)}{\xi}d\xi ,
$$
which in the plane can be rewritten in variables $(r, \theta )$ as
$$
v(re^{i\theta},re^{-i\theta})=\frac{b}{2}w(re^{i\theta},re^{-i\theta})-
\frac{a}{2}\, \int\limits _r^{1} \frac {w(\rho e^{i\theta},\rho e^{-i\theta})}{\rho}d\rho +
 \widetilde C,
$$
where $\widetilde C=v(z_0,\zeta _0)-\frac{b}{2}w(z_0,\zeta _0)$. This implies,
$$
v(\frac{e^{i\theta}}{r},\frac{1}{re^{i\theta}})=
\frac{b}{2}w(\frac{e^{i\theta}}{r},\frac{1}{re^{i\theta}})-
\frac{a}{2}\, \int\limits _{1/r}^{1} \frac {w(\rho e^{i\theta},\rho e^{-i\theta})}{\rho}d\rho +
 \widetilde C,
$$
Making a substitution $\widetilde\rho=1/\rho$ in the integral, we have
$$
v(\frac{e^{i\theta}}{r},\frac{1}{re^{i\theta}})=
\frac{b}{2}w(\frac{e^{i\theta}}{r},\frac{1}{re^{i\theta}})+
\frac{a}{2}\, \int\limits _{r}^{1} \frac {w(\frac{1}{\widetilde\rho} e^{i\theta},\frac{1}{\widetilde\rho} e^{-i\theta})}{\widetilde\rho}d\widetilde\rho +
 \widetilde C.
$$
Plugging into formula (\ref{RCircle}), we obtain
$$
\frac{b}{2}w(\frac{e^{i\theta}}{r},\frac{1}{re^{i\theta}})+
\frac{a}{2}\, \int\limits _{r}^{1} \frac {w(\frac{1}{\widetilde\rho} e^{i\theta},\frac{1}{\widetilde\rho} e^{-i\theta})}{\widetilde\rho}d\widetilde\rho 
 =\frac{b}{2}w(re^{i\theta},re^{-i\theta})
$$
$$
-
\frac{a}{2}\, \int\limits _r^{1} \frac {w(\rho e^{i\theta},\rho e^{-i\theta})}{\rho}d\rho 
-\int\limits _{\frac{1}{r}} ^{r} \frac{ \varphi _v (\rho e^{i\theta},\, \frac{1}{\rho e^{i\theta}})}{\rho}d\rho ,
$$
which leads to the following theorem.
\begin{theorem}
Let $w$ be a solution to the problem (\ref{R1}), then for a pair of points in the plain symmetric with respect to the unit circle centered at the origin, the following reflection formula holds
\begin{equation}\label{RR}
w(\frac{e^{i\theta}}{r}) 
 =w(re^{i\theta})-
\frac{a}{b}\, \int\limits _r^{1} \frac {w(\rho e^{i\theta})+w(\frac{1}{\rho} e^{i\theta})}{\rho}d\rho 
-\frac{1}{b}\int\limits _{\frac{1}{r}} ^{r} \frac{ \varphi _w (\rho e^{i\theta},\, \frac{1}{\rho e^{i\theta}})}{\rho}d\rho .
\end{equation}
\end{theorem}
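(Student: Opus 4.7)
The plan is to combine the Neumann reflection formula (\ref{RCircle}), applied to the auxiliary harmonic function $v$ whose Neumann data satisfies $\varphi_v = \varphi_w/2$, with the Robin-to-Neumann operator (\ref{RN}) which expresses $v$ in terms of the Robin solution $w$. The idea is that evaluating the Robin-to-Neumann formula at an interior point $z$ and at its reflected exterior point $e^{i\theta}/r$ produces two integral representations for $v$. Subtracting these and plugging into the Neumann reflection identity for $v$ eliminates $v$ entirely, leaving a closed identity among values of $w$ on the symmetric pair of points.

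Concretely, I would first specialize (\ref{RN}) to the real slice $\zeta=\bar z$ by setting $z=re^{i\theta}$, $\zeta=re^{-i\theta}$, collapsing the two line integrals on $w_1$ and $w_2$ into a single radial integral of $w(\rho e^{i\theta},\rho e^{-i\theta})/\rho$ and absorbing the base-point data into a constant $\widetilde C = v(z_0,\zeta_0)-\tfrac{b}{2}w(z_0,\zeta_0)$. Then I would apply the same operator at the reflected point $e^{i\theta}/r$, and bring the resulting integral, which lives on $[1/r,1]$, into a form comparable to the first via the substitution $\tilde\rho=1/\rho$; this reverses orientation and converts the integrand $w(\rho e^{i\theta},\rho e^{-i\theta})$ into $w(e^{i\theta}/\tilde\rho,e^{-i\theta}/\tilde\rho)$. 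Substituting both representations of $v$ into (\ref{RCircle}) makes $\widetilde C$ cancel, and after using $\varphi_v=\varphi_w/2$ and dividing by $b/2$ one recovers the claimed identity (\ref{RR}).

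The main obstacle will be the bookkeeping of integration limits and signs under $\tilde\rho=1/\rho$: a dropped factor or a flipped orientation would corrupt the $a/b$ coefficient in front of the $w$-integral or the $1/b$ coefficient in front of the $\varphi_w$-integral. A secondary point to verify carefully is that the holomorphic splitting $w(z,\zeta)=w_1(z)+w_2(\zeta)$ used inside (\ref{RN}) is compatible with restriction to the real slice, so that the two separate contour integrals do genuinely coalesce into a single radial integral of $w$ along the ray $\arg z=\theta$. Once these checks are in place, the theorem follows by straightforward algebraic rearrangement, with no further analytic input beyond the two formulas (\ref{RCircle}) and (\ref{RN}) already proved.
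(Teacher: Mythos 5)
Your proposal is correct and follows essentially the same route as the paper: rewrite the Robin-to-Neumann operator (\ref{RN}) in polar form with the base-point data absorbed into $\widetilde C=v(z_0,\zeta_0)-\tfrac{b}{2}w(z_0,\zeta_0)$, evaluate it at both $re^{i\theta}$ and the reflected point $e^{i\theta}/r$ with the substitution $\widetilde\rho=1/\rho$, and insert both into the Neumann reflection formula (\ref{RCircle}) so that $v$ and $\widetilde C$ cancel. The bookkeeping concerns you flag (orientation under $\widetilde\rho=1/\rho$, the coalescing of the $w_1$, $w_2$ integrals on the real slice) are exactly the points the paper handles, and they work out as you expect.
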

\begin{example}
Let $w$ be a solution to the problem (\ref{R1}) with $\varphi _w=b$.
Then formula (\ref{RR}) reads as
$$
w(\frac{e^{i\theta}}{r}) 
 =w(re^{i\theta})-
\frac{a}{b}\, \int\limits _r^{1} \frac {w(\rho e^{i\theta})+w(\frac{1}{\rho} e^{i\theta})}{\rho}d\rho 
-2\ln r .
$$
\end{example}
\begin{example}
Let $w$ be a solution to the problem (\ref{R1}) with $\varphi _w=(a+2b)(2x^2-1)$. 
Taking into account that $\varphi _w=\frac{a+2b}{2}(z^2+\zeta ^2)$ on $\Gamma _{\mathbb{C}}$,
 formula (\ref{RR}) reduces to
$$
w(\frac{e^{i\theta}}{r}) 
 =w(re^{i\theta})-
\frac{a}{b}\, \int\limits _r^{1} \frac {w(\rho e^{i\theta})+w(\frac{1}{\rho} e^{i\theta})}{\rho}d\rho 
-\frac{a+2b}{2b}\int\limits _{\frac{1}{r}} ^{r} \frac{ \rho ^2 e^{2i\theta}+\frac{1}{\rho ^2 e^{2i\theta}})}{\rho}d\rho ,
$$
which results in
$$
w(\frac{e^{i\theta}}{r}) 
 =w(re^{i\theta})-
\frac{a}{b}\, \int\limits _r^{1} \frac {w(\rho e^{i\theta})+w(\frac{1}{\rho} e^{i\theta})}{\rho}d\rho 
-\frac{a+2b}{2b}(r^2-\frac{1}{r^2}) \cos (2\theta) .
$$
\end{example}
\begin{example}
Let $w$ be a solution to the problem (\ref{R1}) with $\varphi _w=(a+b)\cos\theta$.
Then the corresponding reflection formula is
$$
w(\frac{e^{i\theta}}{r}) 
 =w(re^{i\theta})-
\frac{a}{b}\, \int\limits _r^{1} \frac {w(\rho e^{i\theta})+w(\frac{1}{\rho} e^{i\theta})}{\rho}d\rho 
-\frac{a+b}{2b}(r-\frac{1}{r}) \cos \theta .
$$
\end{example}

\section{Dirichlet to Neumann operator and reflection about an arc of an algebraic curve}
\label{R7}

In this section we generalize the results obtained for a circular arc in sections \ref{R2} and \ref{R3} for the case of an algebraic curve $\Gamma$.
First, we discuss a Dirichlet to Neumann mapping. Then we apply this mapping for derivation of a reflection formula. Remark that this formula was obtained at \cite{S99}, \cite{S18} using different methods.

Consider the following two problems for functions $u(x,y)$ and $v(x,y)$ defined near an arc 
 of an algebraic curve $\Gamma$ in the plane:
\begin{equation}
\left\{
                \begin{array}{ll}\label{Dg}
                  \Delta u=0 \qquad \mbox{near} \quad\Gamma\\
                  u=\varphi           \qquad \mbox{on} \quad \Gamma,\\
                 \end{array}
              \right.
\end{equation}
\begin{equation}
\left\{
                \begin{array}{ll}\label{Ng}
                  \Delta v=0 \qquad \mbox{near} \quad\Gamma\\
                  \frac{\partial v}{\partial n}=\varphi           \qquad \mbox{on} \quad \Gamma,\\
                 \end{array}
              \right.
\end{equation}
where $\varphi$ is  holomorphically continuable into $\mathbb{C}^2$ near $\Gamma$.
\begin{theorem}
Let $\Gamma$ be an arc of an algebraic curve in the plane. 
If $u(z,\zeta)$ is a solution to (\ref{Dg}), then 
\begin{equation}\label{DNg}
v(z,\zeta)=v(z_0,\zeta _0)
+ i \int\limits _z ^{z_0} u_1(\tau )\sqrt{S^{\prime}(\tau )}d\tau
- i\int\limits _\zeta ^{\zeta _0} u_2(\xi )\sqrt{\widetilde S^{\prime}(\xi )}d\xi 
\end{equation}
is a solution to (\ref{Ng}). 
Here $u(z,\zeta)=u_1(z)+u_2(\zeta )$ is a harmonic function such that 
$u(z,\bar z )=\varphi$ on $\Gamma$.
\end{theorem}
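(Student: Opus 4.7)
The plan is to parallel the proof strategy used for the unit circle in Theorem 2, verifying in turn that $v(z,\zeta)$ is harmonic and that it satisfies the required Neumann condition on $\Gamma$. Harmonicity is immediate: the first integrand in (\ref{DNg}) depends only on $\tau$ and the second only on $\xi$, so $\partial v/\partial z = -i\, u_1(z)\sqrt{S'(z)}$ is a function of $z$ alone and $\partial v/\partial \zeta = i\, u_2(\zeta)\sqrt{\widetilde S'(\zeta)}$ is a function of $\zeta$ alone; consequently $\partial^2 v/(\partial z\,\partial\zeta)=0$, i.e., $\Delta v = 0$.

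The nontrivial step is to check $\partial v/\partial n = \varphi$ on $\Gamma$. The key geometric input is an expression for the outward unit normal at a point $z_0 \in \Gamma$ in terms of the Schwarz function. Differentiating the defining relation $\overline{z(t)} = S(z(t))$ along a real parametrization of $\Gamma$ gives $\overline{z'(t)} = S'(z(t))\,z'(t)$, whence the unit tangent at $z_0$ has the form $\pm 1/\sqrt{S'(z_0)}$ for a suitable branch; rotating by $90^\circ$ produces the outward unit normal $N(z_0) = i/\sqrt{S'(z_0)}$, and complex conjugation yields $\bar N(z_0) = -i/\sqrt{\widetilde S'(\zeta_0)}$ through the identity $\overline{S'(z)} = \widetilde S'(\bar z)$ valid on $\Gamma$. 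A consistency check: for $S(z) = 1/z$ this specializes to $N(z_0) = z_0$, which is the outward normal used in the proof of Theorem 2.

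With these ingredients I would compute the normal derivative by the chain rule
\[
\frac{\partial v}{\partial n}(z_0) = N(z_0)\,\frac{\partial v}{\partial z}(z_0,\zeta_0) + \bar N(z_0)\,\frac{\partial v}{\partial \zeta}(z_0,\zeta_0),
\]
which arises because an off-$\Gamma$ displacement by $hN$ sends the complexified coordinates from $(z_0,\zeta_0)$ to $(z_0+hN,\zeta_0+h\bar N)$. Substituting the formulas for $\partial v/\partial z$, $\partial v/\partial \zeta$, $N$, and $\bar N$, the factors $\sqrt{S'(z_0)}$ and $\sqrt{\widetilde S'(\zeta_0)}$ cancel exactly against those in the denominators of $N$ and $\bar N$, leaving $u_1(z_0) + u_2(\zeta_0) = u(z_0,\zeta_0) = \varphi(x_0,y_0)$, as required.

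The main obstacle I anticipate is the bookkeeping around the branches of $\sqrt{S'}$ and $\sqrt{\widetilde S'}$: these must be chosen to be complex conjugates on $\Gamma$, and with the correct sign so that $N$ is the outward rather than inward normal. Once this convention is pinned down, the argument is a direct generalization of the circular-arc proof, with the kernel $1/\tau$ there replaced by the geometric factor $\sqrt{S'(\tau)}$ encoding the local shape of the algebraic curve $\Gamma$.
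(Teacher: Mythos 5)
Your proof is correct and follows essentially the same route as the paper: harmonicity is immediate from the split of $v$ into a $z$-part and a $\zeta$-part, and the Neumann condition is checked by differentiating the integral formula and cancelling the $\sqrt{S'}$ and $\sqrt{\widetilde S'}$ factors against the normal direction, yielding $u_1(z_0)+u_2(\zeta_0)=\varphi$ on $\Gamma$. The only cosmetic difference is that the paper quotes the operator $\frac{\partial}{\partial n}=\frac{i}{\sqrt{S'(z)}}\bigl(\partial_z-S'(z)\,\partial_\zeta\bigr)$ directly from the Davis reference, whereas you rederive the outward normal $N=i/\sqrt{S'}$ from the parametrization of $\Gamma$; the two are equivalent on $\Gamma$ because $\widetilde S'(\zeta)=1/S'(z)$ and $|S'|=1$ there.
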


\begin{proof}
According to formula (\ref{DNg}), function $v(z,\zeta ) $ has a representation 
as a sum of a function $v_1(z)$ and a function $v_2(\zeta )$, and therefore, is harmonic, 
$\frac{\partial ^2 v}{\partial z\partial \zeta}=0$.

To check the  second condition in (\ref{Ng}), note that the normal derivative could be computed using the formula \cite{davis},
$$
\frac{\partial v}{\partial n}(z,\zeta )=\frac{i}{\sqrt{S^{\prime}(z )}}
\Bigl (\frac{\partial v_1}{\partial z}-\frac{\partial v_2}{\partial \zeta}S^{\prime}(z)
\Bigr )=\varphi .
$$
Differentiating formula (\ref{DNg}), we have
$$
\frac{\partial v}{\partial n}(z,\zeta )=\frac{i}{\sqrt{S^{\prime}(z )}}
\Bigl (-iu_1(z )\sqrt{S^{\prime}(z )}-iu_2(\zeta )\sqrt{\widetilde S^{\prime}(\zeta )}
 S^{\prime}(z)  \Bigr ),
$$
therefore, on the  curve $\Gamma$ we obtain
$$
\frac{\partial v}{\partial n}(z,\zeta )_{|_{\Gamma}}=
\Bigl (u_1(z )+u_2(\zeta )\Bigr )_{|_{\Gamma}}=u(z,\zeta )_{|_{\Gamma}}=\varphi .
$$
This finishes the proof.
\end{proof}

\begin{remark}
For the special case when $\Gamma$ is the unit circle centered at the origin, $S(z)=1/z$, formula (\ref{DNg}) reduces to formula (\ref{DN}). 
\end{remark}

To derive the reflection formula generalizing formula (\ref{RCircle}),
let us rewrite formula (\ref{DNg}) at the reflected point,
\begin{equation}\label{DNgr}
v(\widetilde S(\zeta ),S(z))=v(z_0,\zeta _0)
+ i \int\limits _{\widetilde S(\zeta )} ^{\widetilde S(\zeta _0)} u_1(\tau )\sqrt{S^{\prime}(\tau )}d\tau
- i\int\limits _{S(z)} ^{S(z _0)} u_2(\xi )\sqrt{\widetilde S^{\prime}(\xi )}d\xi , 
\end{equation}
and subtract (\ref{DNgr}) from (\ref{DNg}),
$$
v(z,\zeta) -v(\widetilde S(\zeta ),S(z))=
i \int\limits _{z} ^{z_0} u_1(\tau )\sqrt{S^{\prime}(\tau )}d\tau
+i\int\limits _{S(z)} ^{S(z _0)} u_2(\xi )\sqrt{\widetilde S^{\prime}(\xi )}d\xi
$$
$$
-i\int\limits _\zeta ^{\zeta _0} u_2(\xi )\sqrt{\widetilde S^{\prime}(\xi )}d\xi
- i \int\limits _{\widetilde S(\zeta )} ^{\widetilde S(\zeta _0)} u_1(\tau )\sqrt{S^{\prime}(\tau )}d\tau . 
$$
Making the substitution $\xi =S(\tau)$ in the second and third integrals
$$
i\int\limits _{S(z)} ^{S(z _0)} u_2(\xi )\sqrt{\widetilde S^{\prime}(\xi )}d\xi
=i \int\limits _{z} ^{z_0} u_2(S(\tau ) )\sqrt{S^{\prime}(\tau )}d\tau ,
$$
$$
-i\int\limits _\zeta ^{\zeta _0} u_2(\xi )\sqrt{\widetilde S^{\prime}(\xi )}d\xi
=- i \int\limits _{\widetilde S(\zeta )} ^{\widetilde S(\zeta _0)} u_2(S(\tau ))\sqrt{S^{\prime}(\tau )}d\tau ,
$$
we have
$$
v(z,\zeta) -v(\widetilde S(\zeta ),S(z))=
i \int\limits _{z} ^{z_0}[ u_1(\tau )+u_2((S(\tau))]\sqrt{S^{\prime}(\tau )}d\tau
$$
$$
- i \int\limits _{\widetilde S(\zeta )} ^{\widetilde S(\zeta _0)} 
[u_1(\tau )+u_2(S(\tau))]\sqrt{S^{\prime}(\tau )}d\tau .
$$
Taking into account that $u_1(z )+u_2(S(z))=u(z,S(z))=\varphi (z,S(z))$ and $z_0=\widetilde S(\zeta_0)$, we arrive at
the formula generalizing (\ref{RCircle}), 
\begin{equation}
 v(\widetilde S(\zeta ),S(z))=v(z,\zeta)+i\int\limits _{\widetilde S(\zeta )} ^{z} 
\varphi(\tau ,S(\tau))\sqrt{S^{\prime}(\tau )}d\tau .
\end{equation}




\end{document}